\documentclass[12pt]{article}

\usepackage{amsmath,amsthm,amsfonts,amssymb,amscd,cite}
\usepackage{graphicx}

\allowdisplaybreaks[4]

\newtheorem {lemma}{Lemma}
\newtheorem {theorem} {Theorem}

\newtheorem {corollary}{Corollary}

\allowdisplaybreaks [4]
\usepackage{fullpage}

\begin{document}

\title{The Alon-Tarsi number of $K_{3,3}$-minor-free graphs}

\author{Leyou Xu\footnote{Email: leyouxu@m.scnu.edu.cn}, Bo Zhou\footnote{Email: zhoubo@m.scnu.edu.cn}\\ School of Mathematical Sciences, South China Normal University\\ Guangzhou 510631, P.R. China}

\date{}
\maketitle

\begin{abstract}
The well known Wagner's theorem states that a graph is a planar graph if and only if it is $K_5$-minor-free and $K_{3,3}$-minor-free.
Denote by $AT(G)$ the Alon-Tarsi number of a graph $G$.
We show that for any $K_{3,3}$-minor-free graph $G$,  $AT(G)\le 5$,  there exists a matching $M$ and a forest $F$ such that $AT(G-M)\le 4$ and $AT(G-E(F))\le 3$, extending  the result
on the Alon-Tarsi number of $K_5$-minor-free  graphs due to Abe,  Kim and  Ozeki. \\ \\
{\bf Keywords: }  Alon-Tarsi number, $K_{3,3}$-minor-free graph, planar graph\\ \\
{\bf AMS Classifications:} 05C15
\end{abstract}

\section{Introduction}

We consider simple, finite and undirected graphs.  For a graph $G$, denote by  $V(G)$ the vertex set and $E(G)$ the edge set of $G$. A graph $H$ is a minor of a connected graph $G$ if we may obtain $H$ from $G$ by repeatedly deleting vertices and edges and contracting edges. A graph $G$ is $H$-minor-free if $H$ is not a minor of $G$. For $\emptyset\ne V_1\subseteq V(G)$, $G[V_1]$ denotes the subgraph of $G$ induced by $V_1$.
Denote by $K_n$  the complete graph on $n$ vertices and $K_{s,t}$ the complete bipartite graph on  $s$ and $t$ vertices in its color classes, respectively. A clique $S$ of a graph $G$ is a nonempty subset of $V(G)$ such that $G[S]$ is complete.

Let $G$ be a graph and $D$ an orientation of $G$. For $v\in V(G)$,  denote by $d_D^+(v)$ ($d_D^-(v)$, respectively) the out-degree (in-degree, respectively) of $v$ in $D$. The maximum out-degree of $D$ is denoted by $\Delta^+(D)$.
An Eulerian subdigraph (or circulation) $H$ of $D$ is a spanning subdigraph of $D$ such that $d_H^+(v)=d_H^-(v)$ for each vertex $v\in V(G)$.
Denote by $EE(D)$ ($OE(D)$, respectively) the set of all Eulerian subdigraphs of $D$ with even (odd, respectively) number of edges, respectively.  We say that $D$ is acyclic if $D$ does not contain any directed cycle.
Use  the Combinatorial Nullstellensatz, Alon and
Tarsi \cite{AT}  obtained a remarkable relationship between a special orientation of a graph and a certain graph polynomial, which  is known as Alon-Tarsi Theorem.
The Alon-Tarsi number $AT(G)$ of a graph $G$, introduced by Jensen and Toft \cite{JT}, is the minimum integer $k$ such that there exists an orientation $D$ of $G$ with $|EE(D)|\ne |OE(D)|$ and $\Delta^+(D)<k$.

A  coloring of a graph $G$ is a mapping $c: V(G)\rightarrow \mathbb{R}$.
For $d\ge 0$, a $d$-defective coloring of $G$ is a coloring  such that each color class induces a subgraph of maximum degree at most $d$. Particularly, a $0$-defective coloring is said to be proper.
%A coloring $c$ of  $G$ is proper if $c(u)\ne c(v)$ whenever $uv\in E(G)$.
A graph is $k$-colorable if there exists a proper coloring $c$ with $|c(V(G))|\le k$.
The chromatic number $\chi(G)$ of  $G$ is the least $k$ such that $G$ is $k$-colorable.

A list assignment of $G$ is a function $L$ assigning to every
vertex $v\in V(G)$ a set $L(v)\subset \mathbb{R}$.
Given a list assignment $L$, a $d$-defective-$L$-coloring of $G$ is a $d$-defective coloring $c$ such that $c(v)\in L(v)$ for each vertex $v\in V(G)$. %A $0$-defective-$L$-coloring is called an $L$-coloring.
A graph $G$ is $d$-defective $k$-choosable if there exists an $d$-defective-$L$-coloring of $G$ for each assignment $L$ with $|L(v)|\ge k$ for every $v\in V(G)$. Particularly, we say $G$ is $k$-choosable if $G$ is $0$-defective $k$-choosable. The list chromatic number $\chi_\ell(G)$ of  $G$ is the least integer $k$ such that $G$ is $k$-choosable.

By Alon-Tarsi Theorem  \cite{AT}, $\chi(G)\le \chi_\ell(G)\le AT(G)$ for any $G$.

The famous Four-Color Theorem states that for any planar graph $G$, $\chi(G)\le 4$.
Vizing \cite{Viz} asked whether every planar graph is $5$-choosable.
Erd\H{o}s, Rubin and Taylor \cite{ERT} conjectured
that every planar graph is $5$-choosable but not necessarily $4$-choosable.
Thomassen \cite{T} confirmed the former and
Voigt \cite{V} confirmed the latter. %showed that there are planar graphs which are not $4$-choosable.
%So $5$ is a tight upper bound for the list chromatic number of a planar graph.
Cushing and Kierstead \cite{CK} proved  that every planar graph is $1$-defective $4$-choosable.
Let $G$ be a planar graph.
Zhu  \cite{Zhu} improved Thomassen's upper bound by proving that $AT(G)\le 5$, solving a natural open problem \cite{He}. Grytczuk and Zhu \cite{GZ} showed that there exists a matching $M$ of $G$ such that $AT(G-M)\le 4$. Furthermore, Kim,  Kim and  Zhu \cite{KKZ} showed that there exists a forest $F$ in $G$ such that $AT(G-E(F))\le 3$.

By the well known Wagner's theorem \cite{W}, a graph is a planar graph if and only if it is $K_5$-minor-free and $K_{3,3}$-minor-free.
Recently, Abe,  Kim and  Ozeki \cite{AKO} extended the above results on the Alon-Tarsi number from  planar graphs  to $K_5$-minor-free graphs.

\begin{theorem}\label{minor} \cite[Theorem 1.6, Corollary 1.8]{AKO}
Let $G$ be a $K_5$-minor-free graph. The following statements are true.

\begin{enumerate}
\item[(i)]  $AT(G)\le 5$.

\item[(ii)]  There exists a matching $M$ of $G$ such that $AT(G-M)\le 4$.

\item[(iii)]  There exists a forest $F$ in $G$ such that $AT(G-E(F))\le 3$.
\end{enumerate}
\end{theorem}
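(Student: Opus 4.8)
The plan is to argue by induction on $|V(G)|$ using Wagner's structure theorem: every $K_5$-minor-free graph is obtained from planar graphs and copies of the Wagner graph $V_8$ by repeatedly taking clique-sums over cliques of size at most $3$. Since the Alon-Tarsi number does not increase under edge deletion, and a witnessing matching or forest for a larger graph restricts appropriately to a subgraph, I may assume that each clique-sum retains all edges of the summing clique. This splits the argument into three cases. If $G$ is planar, then (i)--(iii) are precisely the cited results of Zhu \cite{Zhu}, Grytczuk and Zhu \cite{GZ}, and Kim, Kim and Zhu \cite{KKZ}. If $G \cong V_8$, then as $V_8$ is $3$-regular, hence $3$-degenerate, one gets $AT(V_8) \le 4$ directly, and explicit choices of a matching and of a forest settle (ii) and (iii); being a single fixed graph, $V_8$ is handled by a finite verification. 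The remaining case is $G = G_1 \oplus_S G_2$, a clique-sum along a clique $S$ with $|S| \le 3$, where $G_1$ and $G_2$ are smaller $K_5$-minor-free graphs eligible for the induction hypothesis.

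For the gluing I would pass to the graph polynomial $P_G = \prod_{uv \in E(G)} (x_u - x_v)$, recalling the Alon-Tarsi characterization: $AT(G) \le k$ if and only if $P_G$ has a nonzero monomial $\prod_v x_v^{c_v}$ with $c_v \le k-1$ for all $v$, the exponents being exactly the out-degrees of a witnessing orientation $D$ with $|EE(D)| \ne |OE(D)|$. The governing identity is that, the edges inside $S$ being the only ones common to $G_1$ and $G_2$, one has $P_{G_1}\cdot P_{G_2} = P_G \cdot P_{G[S]}$ up to sign, where $P_{G[S]}$ equals $1$, a single linear form, or the $3$-variable Vandermonde determinant according as $|S| = 1, 2, 3$. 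Induction supplies nonzero bounded monomials of $P_{G_1}$ and $P_{G_2}$; multiplying them and dividing by an appropriate monomial of $P_{G[S]}$ should yield a nonzero monomial of $P_G$ whose exponents are at most $4$ for (i), and at most $3$ respectively $2$ after a matching respectively a forest has been removed, giving (ii) and (iii).

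The crux is the behaviour at the vertices of $S$. A vertex $v \in S$ collects out-edges from both sides, so a plain product over-counts its out-degree, and since an exponent of $P_{G[S]}$ is at most $2$ this cannot by itself restore the bound $c_v \le 4$. I would therefore strengthen the inductive statement to a rooted form in which $S$ plays the role of Thomassen's precoloured boundary in Zhu's planar argument: each vertex of $S$ is assigned a reduced, in fact extremal, out-degree budget (for instance out-degree $0$ on part of $S$), chosen so that the two side-budgets add up after the Vandermonde correction to within the global bound, and --- crucially --- so that the target coefficient of $P_G$ is produced without cancellation, arising from a single product of a monomial of $P_{G_1}$ with one of $P_{G_2}$. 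Verifying that such extremal monomials exist on both sides, and that the budget arithmetic closes for each of $|S| = 1, 2, 3$ (with $V_8$, being triangle-free, contributing only the cases $|S| \le 2$), is what I expect to be the main obstacle. For (ii) and (iii) this rooted statement must simultaneously track the deleted matching and forest: the matchings and forests returned on the two sides have to amalgamate into a single matching, respectively forest, of $G$, which imposes the compatibility that at most one matching edge meet each vertex of $S$ and that no cycle be closed through $S$ --- constraints I would carry inside the rooted induction so that the pieces always fit together along the summing clique.
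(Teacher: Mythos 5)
Your overall route --- Wagner's decomposition of $K_5$-minor-free graphs into clique-sums (over cliques of size at most $3$) of planar graphs and $V_8$, a rooted induction, and a gluing step at the summing clique --- is the same strategy this paper uses for its $K_{3,3}$ analogue (Theorem \ref{kat}) and, as Remark 1 indicates, is close to the actual argument in \cite{AKO}. Your reduction to clique-sums that retain all clique edges (via monotonicity of $AT$ under edge deletion) and the finite check for $V_8$ (triangle-free, hence only entering sums with $|S|\le 2$) are both sound. But the proposal has a genuine gap exactly at its self-declared crux, and that gap is not a routine verification. The identity $P_{G_1}P_{G_2}=\pm P_G\,P_{G[S]}$ does not allow you to ``multiply two good monomials and divide by a monomial of $P_{G[S]}$'': coefficients of a product are sums over many factorizations, so the product of two nonvanishing coefficients can be destroyed by cross-term cancellation, and a nonzero bounded coefficient of $P_G\,P_{G[S]}$ does not by itself yield a nonzero bounded coefficient of $P_G$. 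The missing device is the one supplied by Lemma \ref{ksum} of this paper: prescribe out-degrees exactly $0,1,\dots,|S|-1$ on the clique vertices in one summand. Then the clique induces a transitive tournament in that summand, no arc leaves $S$ into that summand, no Eulerian subdigraph of the glued orientation can use an arc of that summand incident with $S$, and consequently $|EE(D)|-|OE(D)|$ factors \emph{exactly} as the product of the two corresponding differences. This forcing is what eliminates cancellation, and it also dictates the correct rooted inductive statement (root at an edge with out-degrees exactly $0$ and $1$, and at a triangle with out-degrees exactly $0,1,2$), rather than vaguely ``reduced budgets on part of $S$.''

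Two further steps you only gesture at also require real content. First, for (ii) and (iii) the amalgamation of matchings and forests is not generic ``compatibility'': in the paper's argument one roots the second summand at the clique edge $xy$ with \emph{both} out-degrees $0$, which forces $xy$ to lie in the returned matching $M_2$ (respectively forest $F_2$); one then puts $xy$ back, oriented $(y,x)$, recovering out-degrees $0$ and $1$ while the orientation remains AT (respectively acyclic), and takes $M_1\cup(M_2\setminus\{xy\})$ as the final matching (respectively $F_1\cup(F_2-xy)$ as the final forest). Without this maneuver the two deleted edge sets need not merge into a single matching or forest of $G$. Second, and special to the $K_5$-minor-free case you are proving (unlike the $K_{3,3}$ case, where only $1$- and $2$-sums arise), $3$-sums do occur, so you need the rooted statement at a triangle of a \emph{planar} summand. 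Zhu-type lemmas (Lemma \ref{atp}) root at the boundary of the infinite face, and a summing triangle need not bound a face in any embedding (it may be a separating triangle); one must either refine the decomposition by splitting planar summands along separating triangles so that summing triangles become facial, or prove a stronger planar lemma, which is what \cite[Lemma 3.1]{AKO} provides. Your phrase ``the budget arithmetic closes'' does not engage with either point, so as it stands the proposal is a correct plan whose decisive steps are missing.
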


In this note, we extend these results in another direction to  $K_{3,3}$-minor-free graphs.

\begin{theorem}\label{kat}
Let $G$ be a $K_{3,3}$-minor-free graph. Then the following results hold.
\begin{enumerate}
\item[(i)] $AT(G)\le 5$.
\item[(ii)]  There exists a matching $M$ of $G$ such that $AT(G-M)\le 4$.
\item[(iii)]   There exists a forest $F$ of $G$ such that $AT(G-E(F))\le 3$.
\end{enumerate}
\end{theorem}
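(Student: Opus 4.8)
The plan is to combine \emph{Wagner's structural decomposition} of $K_{3,3}$-minor-free graphs with the planar results of Zhu, Grytczuk--Zhu and Kim--Kim--Zhu quoted in the introduction, arguing by induction along the clique-sum tree. Recall Wagner's theorem: every $3$-connected $K_{3,3}$-minor-free graph is planar or isomorphic to $K_5$, and every $K_{3,3}$-minor-free graph is built from planar graphs and copies of $K_5$ by repeated clique-sums over cliques of size at most $2$. Accordingly I would induct on $|V(G)|$. If $G$ is $3$-connected it is planar or $K_5$: the planar case is exactly the three cited theorems, and the case $G=K_5$ is checked by hand. Here I use $AT(H)\le\mathrm{col}(H)$, which follows by orienting along a degeneracy order so that the resulting acyclic $D$ has $|EE(D)|=1\ne 0=|OE(D)|$. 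Thus $AT(K_5)=5$; deleting a matching of two edges yields a $3$-degenerate graph, giving $AT(K_5-M)\le 4$; and deleting a Hamiltonian path (a spanning tree, hence a forest) yields a $2$-degenerate graph, giving $AT(K_5-E(F))\le 3$.

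For the inductive step, suppose $G$ has a separation $(G_1,G_2)$ with $S:=V(G_1)\cap V(G_2)$ of size at most $2$. Adding the edge on $S$ is a standard operation preserving $K_{3,3}$-minor-freeness, since a missing $uw$ can be routed through the opposite side; so we may assume $S$ is a clique and each $G_i$ is a smaller $K_{3,3}$-minor-free graph to which induction applies. When $|S|\le 1$ a cut-vertex argument (reorienting so that the shared vertex becomes a sink in all but one part, using the strengthened hypothesis below to keep neighbouring out-degrees in range) gives $AT(G)\le\max\{AT(G_1),AT(G_2)\}$, and the matching or forest is just the disjoint union of the pieces' objects. Hence the real content is the case $|S|=2$, say $S=\{u,w\}$ with $uw\in E(G)$.

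For the $2$-sum I would glue orientations $D_1$ of $G_1$ and $D_2$ of $G_2$ that \emph{agree on the edge} $uw$, setting $D=D_1\cup D_2$. Interior out-degrees are inherited, while at $u$ and $w$ they add (minus one for the doubly counted shared edge); to control $\Delta^+(D)$ I must strengthen the inductive statement so each piece supplies an orientation witnessing the relevant bound in which $u$ and $w$ have \emph{prescribed small out-degree}. The Alon--Tarsi invariant must also survive: writing $\sigma(D)=|EE(D)|-|OE(D)|$, every circulation of $D$ splits as $(H_1,H_2)$ with matching flow imbalance $b$ across the $2$-cut, whence $\sigma(D)=\sum_b A_1(b)\,A_2(-b)$, where $A_i(b)$ is the signed count of subdigraphs of $D_i$ balanced off $S$ and carrying imbalance $b$ between $u$ and $w$. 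Since $\{u,w\}$ is a $2$-cut only a few values of $b$ occur, so this is a small transfer computation; the task is to choose the pieces' orientations so that the sum is nonzero — for instance making one side acyclic, so that it contributes a single state and $\sigma(D)$ collapses to one nonzero product.

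Finally, parts (ii) and (iii) run the same induction but must also patch the combinatorial object across $S$. For (ii) the matchings $M_1,M_2$ can clash only by both covering $u$ (or $w$), so the strengthened hypothesis must let me prescribe whether each of $u,w$ is matched; then $M=M_1\cup M_2$ is a matching and the $AT\le 4$ orientations combine as above. For (iii) the danger is that $F_1$ and $F_2$ each join $u$ to $w$, creating a cycle in $F=F_1\cup F_2$, so I must arrange that at most one side connects $u$ to $w$ inside its forest; then $F$ is a forest and the $AT\le 3$ orientations combine. \textbf{The main obstacle} in all three parts is precisely this coupling at the $2$-cut: carrying enough boundary data through the induction — the out-degrees at $u,w$, the matched/unmatched status of $u,w$, and the $u$--$w$ connectivity inside $F$ — so that both the out-degree bounds \emph{and} the nonvanishing of $\sigma(D)$ are preserved under gluing, with planar torsos and $K_5$ torsos meeting these interface conditions simultaneously.
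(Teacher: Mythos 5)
Your outline follows the same skeleton as the paper's proof (Wagner's clique-sum decomposition, induction on the order with a strengthened hypothesis prescribing out-degrees at the cut, base cases planar and $K_5$), but the two steps you leave open or sketch are exactly where the proof lives, and your proposed fix for the key one would fail. The collapse mechanism is wrong as stated: making one side acyclic does \emph{not} reduce $\sigma(D)=\sum_b A_1(b)A_2(-b)$ to a single product. Acyclicity of $D_2$ only gives $A_2(0)=1$; the cross terms $A_2(b)$ with $b\ne 0$ are signed counts of subdigraphs consisting of directed $u$--$w$ (or $w$--$u$) paths together with circulations, and an acyclic digraph can contain many directed paths between the two cut vertices, so these terms survive and can cancel $A_1(0)$. (Moreover, for parts (i) and (ii) the planar pieces only come with AT-orientations, not acyclic ones, so you could not even arrange acyclicity on that side.) What actually kills the cross terms --- this is the paper's Lemma~\ref{ksum} --- is prescribing out-degrees $0$ and $1$ at the two clique vertices on the $G_2$-side: then the clique edge is forced to be oriented into the out-degree-$0$ vertex, exhausting the other vertex's budget, so no arc leaves $K$ into $G_2$; after deleting the clique edges both cut vertices become sinks on that side, any subdigraph of that side with imbalance $b\ne 0$ would need positive out-imbalance at $u$ or $w$, hence $A_2(b)=0$ for all $b\ne 0$ and $\sigma(D)=\sigma(D_1)\,\sigma(D_2)\ne 0$ exactly.

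Second, the interface conditions you call ``the main obstacle'' are not something you need to invent: the planar results of Zhu, Grytczuk--Zhu and Kim--Kim--Zhu are proved in the stronger form quoted as Lemma~\ref{atp}, allowing one to prescribe out-degrees $0$ and $1$ (resp.\ $0$ and $0$) at two consecutive vertices of the outer face, and the correct strengthened hypothesis is pinned to an \emph{edge}: for every $uv\in E(G)$ there is an AT-orientation with $d^+(u)=0$, $d^+(v)=1$, $\Delta^+\le 4$; and for (ii), (iii) a matching (forest) together with an orientation of the remainder in which $d^+(u)=d^+(v)=0$. With this formulation your matching and forest clashes resolve themselves rather than requiring extra bookkeeping: on the $G_2$-side the condition $d^+_{D_2}(x)=d^+_{D_2}(y)=0$ forces the clique edge $xy$ to lie in $M_2$ (resp.\ $E(F_2)$), since otherwise it would be oriented and give $x$ or $y$ positive out-degree; hence $M_1\cup(M_2\setminus\{xy\})$ is automatically a matching, and $F_1\cup(F_2-xy)$ is automatically a forest because $F_2-xy$ contains no $x$--$y$ path. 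Reinserting $xy$ as the arc $(y,x)$ then converts the $(0,0)$-orientation into the $(0,1)$-orientation needed for the gluing lemma, and this preserves the AT (or acyclicity) property because no circulation can use the new arc while $x$ remains a sink. Your $K_5$ computations and the cut-vertex case are fine, but without these two ingredients --- the boundary-prescribed planar lemma and the sink-based factorization --- the inductive step as you describe it does not close.
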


As mentioned above, there exist planar graphs which are not $4$-choosable \cite{V}, which are surely $K_{3,3}$-minor-free, so the bound in Theorem \ref{kat} (i) is tight.
Theorem \ref{kat} (ii) implies that
 a $K_{3,3}$-minor-free graph is $1$-defective $4$-choosable.

\begin{corollary}\label{color} \cite{Wo}
Let $G$ be a $K_{3,3}$-minor-free graph. Then $\chi_\ell(G)\le 5$.
\end{corollary}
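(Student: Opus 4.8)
The plan is to obtain Corollary \ref{color} as an immediate consequence of Theorem \ref{kat}(i), with no further combinatorial work required. The only additional ingredient is the inequality $\chi_\ell(G) \le AT(G)$, recorded earlier in the introduction as a consequence of the Alon-Tarsi Theorem \cite{AT} and valid for every graph $G$. Thus the entire task reduces to concatenating two facts already available to us.

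Concretely, I would argue as follows. Let $G$ be $K_{3,3}$-minor-free. By Theorem \ref{kat}(i) we have $AT(G) \le 5$, and by the Alon-Tarsi inequality we have $\chi_\ell(G) \le AT(G)$. Combining these gives
\[
\chi_\ell(G) \le AT(G) \le 5,
\]
which is exactly the assertion of the corollary, recovering the bound of Woodall \cite{Wo}.

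There is essentially no obstacle to overcome here: all the substantive content is carried by Theorem \ref{kat}(i), and the deduction itself is a one-line chaining of inequalities. It is worth remarking that no direct or independent argument for $\chi_\ell(G) \le 5$ is needed, since the $AT$-bound is strictly stronger than the choosability bound. One could note in passing that the value $5$ cannot be lowered in general, as Voigt's planar non-$4$-choosable examples \cite{V} are $K_{3,3}$-minor-free, so the corollary is sharp for the same reason that Theorem \ref{kat}(i) is sharp.
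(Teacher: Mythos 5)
Your proof is correct and is exactly the deduction the paper intends: the corollary is stated as an immediate consequence of Theorem \ref{kat}(i) together with the inequality $\chi_\ell(G)\le AT(G)$ recorded in the introduction. Nothing further is needed.
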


The study of list coloring of $K_{s,t}$-minor-free graphs has received much attention \cite{Wo, St}.    Steiner \cite{St} proved that for every pair of constants $\epsilon>0$ and $C>1$,  there exists a positive integer $N=N(\epsilon,C)$ such that for all integers $s$ and $t$ satisfying $N\le s\le t\le Cs$, there exists a  $K_{s,t}$-minor-free graph $G$ such that $\chi_\ell(G)>(1-\epsilon)(2s+t)$.
For any such graph $G$, $AT(G)>(1-\epsilon)(2s+t)$. So, in general,  it is impossible that $AT(G)\le s+t-1$ for a
 $K_{s,t}$-minor-free graph $G$.

\section{Proof of Theorem \ref{kat}}

We  need the following important result due to Zhu and his coauthors \cite{Zhu,GZ,KKZ}. In a plane graph every face  is bounded by a closed walk (not necessarily a cycle), which is called
a boundary walk. An orientation $D$ of a graph $G$ is said to be an AT-orientation if $|EE(D)|\ne |OE(D)|$.

\begin{lemma}\label{atp}
Let $G$ be a nontrivial plane graph with a boundary walk $v_1\dots v_m$ of the infinite face. The following statements are true.
\begin{enumerate}
\item[(a)] \cite{Zhu} There exists an AT-orientation $D$ of $G$ such that $d_D^+(v_1)=0$, $d_D^+(v_2)=1$, $d_D^+(v_i)\le 2$ for each $i=3,\dots,m$ and $\Delta^+(D)\le 4$.
\item[(b)] \cite{GZ} There exists a matching $M$ of $G$ and an AT-orientation $D$ of $G-M$ such that $d_D^+(v_1)=d_D^+(v_2)=0$, $d_D^+(v_i)\le 2-d_M(v_i)$ for each $i=3,\dots,m$ and $\Delta^+(D)\le 3$.
\item[(c)] \cite{KKZ} There exists a forest $F$ of $G$ and an acyclic orientation $D$ of $G-E(F)$ such that $d_D^+(v_1)=d_D^+(v_2)=0$,  $d_D^+(v_i)=1$ for each $i=3,\dots,m$ and $\Delta^+(D)\le 2$.
\end{enumerate}
\end{lemma}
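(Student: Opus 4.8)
The three parts share a common inductive skeleton, and my plan is to prove each by induction on $|V(G)|$ using a Thomassen-style reduction on the outer boundary. Throughout I would use the Alon--Tarsi reformulation coming from the Combinatorial Nullstellensatz: writing $P_G=\prod_{uv\in E(G)}(x_u-x_v)$ with a fixed reference orientation to pin down signs, an orientation $D$ with out-degree sequence $(d_D^+(v))_v$ is an AT-orientation precisely when the coefficient of the monomial $\prod_v x_v^{d_D^+(v)}$ in $P_G$ is nonzero. Thus for (a) it suffices to exhibit, for the target out-degree sequence, a monomial of nonzero coefficient, while (b) and (c) ask for the same after first deleting a matching $M$ or the edge set of a forest $F$. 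A convenient first reduction is to assume $G$ is $2$-connected with the outer face bounded by a cycle $v_1v_2\cdots v_m$: a cut vertex splits $G$ into blocks whose orientations glue at the shared vertex, and one may add interior chords to make $G$ an edge-maximal plane graph (a near-triangulation), since deleting edges only relaxes the out-degree budget.

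The inductive step is the classical chord-versus-no-chord dichotomy on the outer cycle $C=v_1\cdots v_m$. If $C$ has a chord $v_iv_j$, split $G$ along it into two smaller plane graphs $G_1,G_2$, each carrying the chord on its outer boundary; relabel the boundary walks of $G_1,G_2$ so that the distinguished start vertices inherit the prescribed out-degrees ($0$ at $v_1$, $1$ at $v_2$), apply the induction hypothesis to each, and combine. Here $P_G$ does not factor cleanly, since $G_1$ and $G_2$ share the vertices $v_i,v_j$ and the chord edge, so the coefficient in $P_G$ is a convolution over how the out-degrees at $v_i$ and $v_j$ are apportioned between the two factors; one must show that some admissible apportionment gives a nonzero product. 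If $C$ has no chord, delete $v_m$: its neighbors other than $v_1,v_{m-1}$ are interior vertices forming a fan-path $u_1\cdots u_k$ on the outer boundary of $G-v_m$, so the induction hypothesis applies to $G-v_m$ with new boundary walk $v_1v_2\cdots v_{m-1}u_k\cdots u_1$ (which preserves the roles of $v_1$ and $v_2$), after which the edges at $v_m$ are oriented to restore $d^+(v_m)\le 2$ and the prescribed values at $v_1,v_2$. Verifying that the combined out-degree sequence stays within $0,1,\le 2,\Delta^+\le 4$ is exactly where the asymmetric boundary conditions must be tracked.

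Parts (b) and (c) refine the same induction while simultaneously selecting the deleted structure. For (b) I would, in the no-chord step, place a suitable boundary edge at $v_m$ into $M$ whenever that lets $d^+(v_m)$ drop by one, propagating the constraints $d^+(v_i)\le 2-d_M(v_i)$ and $\Delta^+\le 3$ through the recursion, while ensuring the gluing keeps $M$ a matching across the interface (no vertex matched in both pieces). For (c) the target orientation is acyclic, which is automatically an AT-orientation (only the empty Eulerian subdigraph survives, so $|EE(D)|=1\ne 0=|OE(D)|$); hence (c) is really the structural claim that a forest $F$ can be removed so that $G-E(F)$ admits an acyclic orientation with $d^+(v_1)=d^+(v_2)=0$, $d^+(v_i)=1$ for $i\ge 3$, and $\Delta^+\le 2$, i.e. that $G-E(F)$ is $2$-degenerate with this boundary pattern. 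Since a union of two forests is only $3$-degenerate, arboricity alone does not suffice, so the forest must be built greedily along the induction, pushing each processed vertex's excess edges into $F$ while keeping $F$ acyclic.

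The main obstacle in all three parts is the combination step: showing that nonvanishing of the graph-polynomial coefficient (equivalently $|EE(D)|\ne|OE(D)|$) is preserved when the two pieces are reassembled or when $v_m$ is reattached, rather than destroyed by cancellation in the interface convolution. A useful device is to argue modulo $2$: since $|EE(D)|-|OE(D)|\equiv|EE(D)|+|OE(D)|\pmod 2$ is the parity of the total number of Eulerian subdigraphs, it suffices to force this total count odd, which converts the delicate signed combination into a parity statement that interacts more transparently with the recursion. Making this coefficient/parity bookkeeping survive the asymmetric constraints at $v_1$ and $v_2$, together with the acyclicity requirement in (c) and the matching-disjointness in (b), is the crux of the whole lemma.
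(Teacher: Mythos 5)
First, a point of reference: the paper does not prove Lemma~\ref{atp} at all --- it is imported verbatim from \cite{Zhu}, \cite{GZ} and \cite{KKZ} --- so your attempt has to be measured against those proofs (and against the gluing mechanism the present paper does prove, in Lemma~\ref{ksum}). Your skeleton (Thomassen-style induction on the outer boundary, chord versus no-chord) is indeed the architecture of the cited proofs, but the decisive steps are left open, and the one concrete device you offer for them would fail. In the chord case you describe the interface as a convolution over how the out-degrees at $v_i,v_j$ are apportioned, and say ``one must show that some admissible apportionment gives a nonzero product''; a sum of nonzero products can cancel, and you supply no mechanism preventing this. The actual proofs never face a convolution: the induction is set up so that in the piece $G_2$ not containing $v_1v_2$, the chord endpoints play the distinguished roles $d_{D_2}^+(v_i)=0$, $d_{D_2}^+(v_j)=1$, which forces the unique out-arc of $v_j$ in $D_2$ to be the chord itself; then no arc leaves $\{v_i,v_j\}$ into the rest of $G_2$, no Eulerian subdigraph crosses the cut, and $|EE(D)|-|OE(D)|$ factors \emph{exactly} as the product of the two differences --- this is precisely Claim~1 in the paper's Lemma~\ref{ksum}. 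Your fallback of forcing the \emph{total} number of Eulerian subdigraphs odd is a strictly stronger invariant than $|EE(D)|\ne|OE(D)|$ (an orientation with $|EE|=3$, $|OE|=1$ is a perfectly good AT-orientation with even total), it is not what any of the cited proofs maintain, and nothing in your reattachment step controls the parity of the new Eulerian subdigraphs created through $v_m$, so the mod-$2$ bookkeeping cannot close the induction. Likewise, in the no-chord case the real issue is not the degree bookkeeping you track but AT-ness after reattaching $v_m$: the known argument chooses the out-arcs at $v_m$ so that, using $d_D^+(v_1)=0$ ($v_1$ is a sink), $v_m$ lies on no directed cycle, whence the Eulerian subdigraph counts are literally unchanged; your sketch is silent on this.

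There is also a flaw in your preliminary reductions. The claim that one may triangulate ``since deleting edges only relaxes the out-degree budget'' is false at the level of orientations: restricting an AT-orientation to a subgraph need not be an AT-orientation, since the Eulerian counts change uncontrollably. It can be repaired through the coefficient formulation ($P_{G'}=P_G\cdot\prod(x_u-x_v)$ over added edges yields an AT-orientation of $G$ with coordinatewise smaller out-degrees), but then the exact equalities the lemma asserts --- $d_D^+(v_2)=1$ in (a), which is used \emph{with equality} when the lemma is fed into Lemma~\ref{ksum}, and $d_D^+(v_i)=1$ in (c) --- degrade to inequalities; moreover for (c) the restricted orientation stays acyclic but the argument must be redone outside the polynomial framework, and for (b) the matching found in the triangulation must be intersected back with $E(G)$ while preserving the constraint $d_D^+(v_i)\le 2-d_M(v_i)$. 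Finally, the block reduction at a cut vertex is glossed: a cut vertex on the boundary walk can receive out-degree up to $2$ from each block, breaching the cap of $2$, unless each non-root block designates that vertex as its out-degree-$0$ root --- exactly the kind of asymmetric bookkeeping the cited proofs carry through and your sketch only names as ``the crux'' without resolving.
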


Let $G_1$ and $G_2$ be two vertex disjoint graphs.
Suppose that $X_i\subset  V(G_i)$  is a clique of $G_i$ for $i=1,2$ with $|X_1|=|X_2|=k$.
Let $f: X_1\rightarrow X_2$ be a bijection.
A graph $G$ obtained from $G_1$ and $G_2$ by identifying $x$ and $f(x)$ for every $x\in X_1$
 and possibly deleting some edges of the clique %so that $G[X_1]=G[X_2]\subseteq K_k$
is called a $k$-clique-sum of $G_1$ and $G_2$. Evidently, a $0$-sum of $G_1$ and $G_2$ is $G_1\cup G_2$.

\begin{lemma}\label{k33}\cite{W}
A graph $G$ is $K_{3,3}$-minor-free if and  only  if $G$ is a planar graph or $K_5$, or $G$ can be obtained from planar graphs and $K_5$ by $0$-, $1$-, and $2$-sums.
\end{lemma}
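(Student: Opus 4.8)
The plan is to prove Wagner's characterization (Lemma~\ref{k33}) in both directions, with the substantive work concentrated in a structural analysis of the $3$-connected case.

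\textbf{Sufficiency.} I would first check that the stated building operations cannot create a $K_{3,3}$ minor. A planar graph has no $K_{3,3}$ minor because $K_{3,3}$ is itself nonplanar, and $K_5$ has no $K_{3,3}$ minor because every minor of $K_5$ has at most $5$ vertices whereas $K_{3,3}$ has $6$. The crux of this direction is a gluing lemma: \emph{if $G$ is a $k$-clique-sum of $G_1$ and $G_2$ with $k\le 2$, then every $3$-connected minor of $G$ is a minor of $G_1$ or of $G_2$.} Indeed, the shared clique $X$ of size $k\le 2$ together with its incident structure yields a separation of $G$ of order at most $2$; since a $3$-connected graph cannot be split by a separation of order at most $2$, any model of such a minor can be rerouted to lie entirely inside one side $G_i$ (possibly using the $\le 2$ shared vertices). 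As $K_{3,3}$ is $3$-connected, applying this lemma along the clique-sum tree shows inductively that any graph assembled from planar graphs and copies of $K_5$ by $0$-, $1$-, and $2$-sums inherits $K_{3,3}$-minor-freeness from its parts. The clause allowing deletion of clique edges only removes edges and hence cannot create a minor, so it is harmless here.

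\textbf{Necessity: reduction to $3$-connectivity.} Conversely, assume $G$ is $K_{3,3}$-minor-free. I would invoke the classical decomposition of an arbitrary graph into its $3$-connected pieces: splitting $G$ successively at its components, cut vertices, and $2$-separations realizes $G$ as an iterated $0$-, $1$-, and $2$-sum of graphs that are each either $3$-connected, a cycle, or an edge (Tutte's theory of $3$-block decomposition). Every piece in this decomposition is a minor of $G$ — it is obtained by contracting the opposite side of each separation — and is therefore itself $K_{3,3}$-minor-free. Cycles and edges are planar, so the entire theorem reduces to the claim that every $3$-connected $K_{3,3}$-minor-free graph is planar or is $K_5$; substituting this classification back into the decomposition then exhibits $G$ in the required form.

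\textbf{The $3$-connected case (the crux).} Let $G$ be $3$-connected and $K_{3,3}$-minor-free. If $G$ is planar we are done, so assume $G$ is nonplanar. By the Kuratowski--Wagner theorem $G$ has a $K_5$ or a $K_{3,3}$ minor, and $K_{3,3}$-minor-freeness forces a $K_5$ minor. Fix a $K_5$-model with branch sets $B_1,\dots,B_5$ minimizing $\sum_i|B_i|$; minimality makes each $B_i$ a tree and forces each leaf of each tree to be the unique vertex of its branch set adjacent to some specified other branch set. I claim $|V(G)|=5$, which together with $3$-connectivity and the $K_5$ minor gives $G=K_5$. Suppose instead $|V(G)|\ge 6$. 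If some vertex $w$ lies outside $\bigcup_iB_i$, then by Menger's theorem $3$-connectivity provides three internally disjoint paths from $w$ to $\bigcup_iB_i$ ending in three distinct branch sets, say $B_1,B_2,B_3$; absorbing these paths makes the contracted vertex $w$ adjacent to $b_1,b_2,b_3$, and then the bipartition $\{b_4,b_5,w\}$ versus $\{b_1,b_2,b_3\}$ exhibits a $K_{3,3}$ minor, a contradiction. Otherwise $\bigcup_iB_i=V(G)$ and some branch set, say $B_1$, is a tree on at least two vertices; splitting $B_1$ across one of its edges into connected pieces $X$ and $Y$, I would use the degree condition from $3$-connectivity (every leaf of $B_1$ has at least two neighbors outside $B_1$) to partition $\{B_2,B_3,B_4,B_5\}$ into pairs $P\subseteq N(X)$ and $Q\subseteq N(Y)$; then $P\cup\{Y\}$ versus $Q\cup\{X\}$ is a $K_{3,3}$ minor, again a contradiction.

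\textbf{Main obstacle.} I expect the covered case of the $3$-connected step to be the principal difficulty: one must choose the split edge of $B_1$ and the $2$-$2$ partition of the remaining four branch sets compatibly, and verifying that such a compatible choice always exists requires combining the minimality of the $K_5$-model with a Hall-type argument backed by $3$-connectivity (to rule out a branch set adjacent to only one side of every split, which would otherwise expose a $2$-separation). The rerouting of minor models across clique-sums in the sufficiency direction, and the verification that the $3$-connected pieces are genuinely minors of $G$ in the reduction, are the remaining points demanding care, though they are standard.
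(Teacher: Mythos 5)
The paper does not prove this lemma at all---it is quoted as Wagner's decomposition theorem with the citation \cite{W}---so your proposal must be judged on its own merits rather than against an in-paper argument. Your overall architecture (sufficiency via a clique-sum gluing lemma for $3$-connected minors, necessity via Tutte's $3$-block decomposition plus the classification of $3$-connected $K_{3,3}$-minor-free graphs) is exactly the classical route, and the sufficiency direction and the reduction step are sound, including the two points that need care: the virtual edge of each $2$-separation is realized as a minor by contracting an $x$--$y$ path on the opposite side, and its possible absence in $G$ is absorbed by the edge-deletion clause in the paper's definition of clique-sum.

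There are, however, two genuine gaps in your $3$-connected crux, and they are not mere bookkeeping. First, in the uncovered case, Menger's fan theorem gives three paths from $w$ to $\bigcup_i B_i$ meeting it in three distinct \emph{vertices}, not in three distinct \emph{branch sets}; when some $B_i$ is large, two fan paths may land in the same $B_i$, and your $K_{3,3}$ bipartition collapses. Second, in the covered case, the $2$--$2$ split you need can genuinely fail: if $B_1$ is the edge $ab$ and, by minimality, $B_5$ attaches only at $b$ while $B_2,B_3,B_4$ attach only at $a$, no edge of $B_1$ admits a $2$--$2$ partition, and---contrary to your suggested repair---this configuration does not directly ``expose a $2$-separation,'' since $b$ may have two or more neighbors inside $B_5$ when $|B_5|\ge 2$; ruling it out requires either re-choosing which branch set to split or a further exchange argument on the model, neither of which you supply (you flag this yourself as the main obstacle, but flagging it does not close it). The standard way to avoid both difficulties, going back to Hall (1943), is to work with a $K_5$-\emph{subdivision} rather than a minor model: a $K_5$ minor yields a $TK_5$ or $TK_{3,3}$, the latter is excluded, and with $TK_5$ the branch vertices are genuine vertices, so if $|V(G)|\ge 6$ then $3$-connectivity gives either an interior vertex $u$ of a branch path with a third disjoint connection to the rest of the subdivision, or an outside vertex with a fan to five singleton branch vertices; in both situations the $K_{3,3}$ minor falls out immediately, with no splitting or Hall-type matching argument needed. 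I recommend you restructure the crux along those lines, or else prove the two missing claims explicitly.
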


\begin{lemma}\label{ksum}
Let $G$ be a graph obtained by the $k$-clique-sum of $G_1$ and $G_2$, where $k\ge 1$. Let $K$ be their common clique with $K=\{v_1,\dots,v_k\}$. Let $G_i'=G_i\cap G$ for $i=1,2$.
Suppose that $G_1'$ has an AT-orientation $D_1'$ with $\Delta^+(D_1')\le \ell$
and that $G_2$ has an AT-orientation $D_2$ with $\Delta^+(D_2)\le \ell$ and $d^+_{D_2}(v_i)=i-1$ for $i=1,\dots,k$. Then $G$ has an AT-orientation $D$ such that $\Delta^+(D)\le \ell$ and $d_D^+(v)=d_{D_1'}^+(v)$ for each $v\in V(G_1)$.
\end{lemma}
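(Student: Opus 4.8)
The plan is to build the orientation $D$ explicitly and then certify the condition $|EE(D)|\ne|OE(D)|$ through the graph polynomial together with the Combinatorial Nullstellensatz, as in the standard Alon--Tarsi framework. Since $K=\{v_1,\dots,v_k\}$ is the common clique, the edge set of $G$ splits as the disjoint union of $E(G_1')$ and the set of non-clique edges of $G_2$ (the surviving clique edges lie in $G_1'$). Accordingly I would let $D$ agree with $D_1'$ on $G_1'$ and with $D_2$ on the non-clique edges of $G_2$.

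First I would pin down the out-degrees. The hypothesis $d_{D_2}^+(v_i)=i-1$ is very rigid: the clique-internal out-degrees of $D_2$ sum over $K$ to exactly $\binom{k}{2}=\sum_{i=1}^{k}(i-1)$, while each clique-internal out-degree is at most the total out-degree $i-1$; equality forces every $v_i$ to send all of its out-edges inside $K$ and to have out-degree $0$ among the non-clique edges of $G_2$. Hence in $D$ each $v_i$ gains no out-edge from the $G_2$-side, so $d_D^+(v_i)=d_{D_1'}^+(v_i)$; for $v\in V(G_1)\setminus K$ every incident edge lies in $G_1'$ and for $v\in V(G_2)\setminus K$ every incident edge is a non-clique edge of $G_2$, giving $d_D^+(v)=d_{D_1'}^+(v)$ and $d_D^+(v)=d_{D_2}^+(v)\le\ell$ respectively. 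This already yields $d_D^+(v)=d_{D_1'}^+(v)$ for all $v\in V(G_1)$ and $\Delta^+(D)\le\ell$.

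The core is to show $D$ is an AT-orientation. I would fix a vertex order and use the graph polynomial $P_H=\prod_{uv\in E(H)}(x_u-x_v)$, together with the standard fact that for an orientation $D'$ of $H$ the coefficient of $\prod_v x_v^{d_{D'}^+(v)}$ in $P_H$ is $\pm(|EE(D')|-|OE(D')|)$. Because the edges split, $P_G=P_{G_1'}\cdot Q$, where $Q$ is the product of $(x_u-x_v)$ over the non-clique edges of $G_2$; moreover $P_{G_2}=P_K\cdot Q$ with $P_K=\prod_{1\le i<j\le k}(x_{v_i}-x_{v_j})$ the Vandermonde in the clique variables. Writing $M=\prod_v x_v^{d_D^+(v)}$, I would extract its coefficient in $P_{G_1'}\cdot Q$: matching the variables of $V(G_1)\setminus K$ to $P_{G_1'}$ and those of $V(G_2)\setminus K$ to $Q$, the only remaining freedom is how each clique exponent $b_i=d_{D_1'}^+(v_i)$ splits as $p_i+q_i=b_i$ between the two factors.

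The main obstacle, and the crux of the argument, is collapsing this convolution to a single nonzero term. Degree bookkeeping does the work: $P_{G_1'}$ is homogeneous of degree $|E(G_1')|=\sum_{v\in V(G_1)}d_{D_1'}^+(v)$, so a nonzero contribution forces $\sum_i p_i=\sum_i b_i$, and with $p_i\le b_i$ this gives $p_i=b_i$ and $q_i=0$ for every $i$. Thus the coefficient of $M$ in $P_G$ equals the product of the coefficient of the $D_1'$-monomial in $P_{G_1'}$, which is $\pm(|EE(D_1')|-|OE(D_1')|)\ne 0$ since $D_1'$ is an AT-orientation, with the coefficient of $\prod_{v\in V(G_2)\setminus K} x_v^{d_{D_2}^+(v)}$ in $Q$. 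For this last factor I would run the same degree argument inside $P_{G_2}=P_K\cdot Q$ at the $D_2$-monomial $\prod_i x_{v_i}^{\,i-1}\prod_{v\in V(G_2)\setminus K}x_v^{d_{D_2}^+(v)}$: since $\sum_i(i-1)=\binom{k}{2}$ absorbs all the degree of $P_K$, the clique exponents taken from $Q$ are forced to $0$ and those from $P_K$ to $(0,1,\dots,k-1)$, whose Vandermonde coefficient is $\pm1$. Hence this factor equals $\pm(|EE(D_2)|-|OE(D_2)|)\ne0$. Multiplying the two nonzero factors shows the coefficient of $M$ in $P_G$ is nonzero, that is $|EE(D)|\ne|OE(D)|$, which completes the proof.
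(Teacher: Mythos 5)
Your proposal is correct, and while your construction of $D$ is the same as the paper's (glue $D_1'$ to the restriction of $D_2$ on the non-clique edges of $G_2$, and force, via the tournament count $\binom{k}{2}=\sum_{i=1}^k(i-1)$, that every out-arc of each $v_i$ in $D_2$ stays inside $K$), the way you certify that $D$ is an AT-orientation is genuinely different. The paper stays entirely combinatorial: it proves (by a degree-sum contradiction, its Claim 1) that no Eulerian subdigraph of $D$ can use an arc crossing between $V(G_2)\setminus K$ and $K$, so every Eulerian subdigraph splits edge-disjointly into Eulerian subdigraphs of the two sides; it also notes that Eulerian subdigraphs of $D_2$ avoid $K$ altogether, so $EE(D_2')=EE(D_2)$ and $OE(D_2')=OE(D_2)$, and then obtains $|EE(D)|-|OE(D)|=\left(|EE(D_1')|-|OE(D_1')|\right)\left(|EE(D_2')|-|OE(D_2')|\right)\ne 0$ by an explicit parity-preserving bijection. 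You instead route everything through the Alon--Tarsi polynomial identity: the factorization $P_G=P_{G_1'}\cdot Q$ with homogeneity of $P_{G_1'}$ collapsing the convolution over clique exponents, and a second degree argument in $P_{G_2}=P_K\cdot Q$ together with the Vandermonde coefficient $\pm 1$ identifying the coefficient of the clique-free monomial in $Q$ with $\pm\left(|EE(D_2)|-|OE(D_2)|\right)$. The two arguments prove the same multiplicativity, and in fact your degree-forcing step is the polynomial shadow of the paper's Claim 1, while your Vandermonde step mirrors its observation that Eulerian subdigraphs of $D_2$ never touch $K$. What each buys: the paper's route is self-contained at the level of Eulerian counts and never needs the polynomial characterization; yours uses the Alon--Tarsi identity (coefficient of the out-degree monomial equals $\pm(|EE|-|OE|)$) as a black box, which makes the verification shorter and more mechanical, and it makes transparent why the hypothesis $d_{D_2}^+(v_i)=i-1$ is exactly right: it saturates the degree of the clique's Vandermonde factor. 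A minor bonus of your write-up is that restricting $D_2$ to the non-clique edges of $G_2$ is stated precisely, whereas the paper's ``restriction of $D_2$ on $G_2-E(G[K])$'' is slightly loose when the clique-sum deletes some clique edges.
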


\begin{proof}
Let $D_2'$ be restriction of $D_2$ on $G_2-E(G[K])$. Then $D=D_1'\cup D_2'$ is an orientation of $G$.
As $d_{D_2}^+(v_i)=i-1$ for $i=1,\dots,k$ and $D_2[K]$ is a tournament, $D_2[K]$ is transitive and there is no arc from $K$ to $V(G_2)\setminus K$ in $D_2$.
Then $d_{D_2'}^+(u)=0$ for $u\in K$.
So $d_D^+(v)=d_{D_1'}^+(v)$ for each $v\in V(G_1)$. Note that $d_D^+(v)=d_{D_2'}^+(v)=d_{D_2}^+(v)$ for each $v\in V(G_2)\setminus K$. Thus $\Delta^+(D)=\max\{\Delta^+(D_1'),\Delta^+(D_2')\}\le \ell$. We are left to show that $D$ is an AT-orientation of $G$.
	
Let $H$ be an Eulerian subdigraph of $D$ and $H_1=H[V(G_1)]$.

\noindent
{\bf Claim 1.}  $H$ contains no arcs  from $V(G_2)\setminus K$ to $K$.
	
\begin{proof}
If there exists some vertex $v\in K$ and $w\in V(G_2)\setminus K$ such that $(w,v)$ is an arc of $H$, then $d_{H_1}^-(v)<d_{H}^{-}(v)$.
As there is no arcs from $K$ to $V(G_2)\setminus K$ in $D_2$, we have $d_{H_1}^+(u)=d_H^+(u)$ for each $u\in K$. Thus $d_{H_1}^+(u)=d_H^+(u)$ for each $u\in V(G_1)$, so
\[
\sum_{u\in V(H_1)}d_{H}^+(u)=\sum_{u\in V(H_1)}d_{H_1}^+(u)
=|A(H_1)|=\sum_{u\in V(H_1)}d_{H_1}^-(u)
<\sum_{u\in V(H_1)}d_{H}^-(u),
\]
a contradiction.
\end{proof}
	
By Claim 1, $H$ contains no arcs either from $K$ to $V(G_2)\setminus K$ or from $V(G_2)\setminus K$ to $K$.
Therefore, $H$ has an edge-disjoint decomposition $H=H_1\cup H_2$, where $H_1$ and $H_2$ are Eulerian subdigraphs of $D_1'$ and $D_2'$, respectively.

If $H\in EE(D)$, then either $H_1\in EE(D_1')$ and $H_2\in EE(D_2')$ or $H_1\in OE(D_1')$ and $H_2\in OE(D_2')$.
%If $H\in OE(D)$, then either $H_1\in EE(D_1')$ and $H_2\in OE(D_2')$ or
%$H_1\in OE(D_1')$ and $H_2\in EE(D_2')$.
On the other hand, $H_1'\cup H_2'\in EE(D)$ for any $H_i'\in EE(D_i')$ with $i=1,2$ or $H_i'\in OE(D_1')$ with $i=1,2$.
%For any $H_1'\in EE(D_1')$ and $H_2'\in OE(D_2')$, or  $H_1'\in OE(D_1')$ and $H_2'\in EE(D_2')$, $H_1'\cup H_2'\in OE(D)$.
Thus, there is a bijection between $EE(D)$ and $(EE(D_1')\times EE(D_2'))\cup (OE(D_1')\times OE(D_2'))$. Similarly, there is a bijection between $OE(D)$ and $(EE(D_1')\times OE(D_2'))\cup (OE(D_1')\times EE(D_2'))$.
Note that any Eulerian subdigraph of $D_2$ contains no arc incident to vertices in $K$ as $d_{D_2}^+(v_i)=i-1$ for $i=1,\dots,k$. Thus $EE(D_2')=EE(D_2)$ and $OE(D_2')=OE(D_2)$.
Recall that $D_1'$ and $D_2$ are AT-orientation. Thus $|EE(D_1')|-|OE(D_1')|\ne 0$ and $|EE(D_2')|-|OE(D_2')|\ne 0$.
It hence follows that
\begin{align*}
|EE(D)|-|OE(D)|&=\left(|EE(D_1')|\times |EE(D_2')|+ |OE(D_1')|\times |OE(D_2')|\right)\\
&\quad -\left(|EE(D_1')|\times |OE(D_2')|+ |OE(D_1')|\times |EE(D_2')|\right)\\
&=\left(|EE(D_1')|-|OE(D_1')| \right)\left(|EE(D_2')|-|OE(D_2')| \right)\\
&\ne 0,
\end{align*}
which implies that $D$ is an AT-orientation of $G$.
\end{proof}

Now we are ready to prove Theorem \ref{kat}.

\begin{proof}[Proof of Theorem \ref{kat}]
It suffices to show that for each $uv\in E(G)$, we have
\begin{enumerate}
\item[(a)] There exists an AT-orientation $D$ of $G$ such that $\Delta^+(D)\le 4$, $d_D^+(u)=0$ and $d_D^+(v)=1$.
\item[(b)] There exists a matching $M$ of $G$ and an AT-orientation $D$ of $G-M$ such that $\Delta^+(D)\le 3$ and $d_D^+(u)=d_D^+(v)=0$.
\item[(c)] There exists a forest $F$ of $G$ and an acyclic orientation $D$ of $G-E(F)$ such that
$\Delta^+(D)\le 2$ and $d_D^+(u)=d_D^+(v)=0$.
\end{enumerate}

If $G$ is a planar graph, then we may assume that $G$ is an plane graph so that edge $uv$ lies on the boundary of the infinite face,  so Item
(a) ((b), (c), respectively)  follows from (a) ((b), (c), respectively)  of Lemma \ref{atp}.

Suppose that $G\cong K_5$. Let $V(G)=\{v_1,\dots,v_5\}$ with $u=v_1,v=v_2$.

Let $D$ be the orientation of $G$ such  that $(v_i,v_j)$ is an arc of $D$ if and only if $i>j$. It is  obvious that $d_D^+(v_1)=0$, $d_D^+(v_2)=1$ and $\Delta^+(D)=4$. As $D$ is an acyclic orientation, it is an AT-orientation. So (a) follows.

Let $M=\{v_1v_2,v_4v_5 \}$ and let $D$ be the orientation of $G-M$ such that  $(v_i,v_j)$ is an arc of $D$ if and only if $i>j$. Then  $d_D^+(v_1)=d_D^+(v_2)=0$ and $\Delta^+(D)=3$. As $D$ is an acyclic orientation, it is an AT-orientation. So (b) follows.

Let $F$ be a forest with $E(F)=\{v_1v_2,v_3v_5,v_2v_4,v_4v_5\}$ and let $D$ be the orientation of $G-E(F)$ such that $(v_i,v_j)$ is an arc of $D$ if and only if $i>j$. Then $d_D^+(v_1)=d_D^+(v_2)=0$ and $\Delta^+(D)=2$. It is easy to see that $D$ is acyclic, so (c) follows.

Suppose that $G$ is not planar and $G\ncong K_5$.
Suppose by contradiction that %Item (a) ((b), (c), respectively) is not true. Let
 $G$ is a minimum counterexample with respect to the order. Then  $G$ is connected by the minimality of $G$.
As $G$ is $K_{3,3}$-minor-free, we have by Lemma \ref{k33} that there exists two $K_{3,3}$-minor-free graphs $G_1$ and $G_2$ such that $G$ is a $k$-clique-sum of $G_1$ and $G_2$ with  $k=1,2$.
Let $K$ be the common clique of $G_1$ and $G_2$ with $K=\{x\}$ if $k=1$ and $K=\{x,y\}$ if $k=2$.
Let $G_i'=G\cap G_i$  for $i=1,2$.
Let $uv\in  E(G_1')$. By the minimality of $G$, we have
\begin{enumerate}
\item[(a1)] There exists an AT-orientation $D_1$ of $G_1'$ such that $\Delta^+(D_1)\le 4$, $d_{D_1}^+(u)=0$ and $d_{D_1}^+(v)=1$.
\item[(b1)] There exists a matching $M_1$ of $G_1'$ and an AT-orientation $D_1$ of $G_1'-M_1$ such that $\Delta^+(D_1)\le 3$ and  $d_{D_1}^+(u)=d_{D_1}^+(v)=0$.
\item[(c1)] There exists a forest $F_1$ of $G_1'$ and an acyclic orientation $D_1$ of $G_1'-E(F_1)$ such that $\Delta^+(D_1)\le 2$ and $d_{D_1}^+(u)=d_{D_1}^+(v)=0$.
\end{enumerate}

Firstly, we show (a). By the minimality of $G$, there exists an AT-orientation $D_2$ of $G_2$ such that $\Delta^+(D_2)\le 4$, $d_{D_2}^+(x)=0$ and if $k=2$, then  $d_{D_2}^+(y)=1$. So, by (a1) and Lemma \ref{ksum}, $G$ has an AT-orientation $D$ such that $\Delta^+(D)\le 4$, $d_D^+(u)=0$, and $d_D^+(v)=1$,  contradicting the choice of $G$.

Secondly, we show (b).
Denote by $y$ a neighbor of $x$ in $G_2$ if $k=1$.
By the minimality of $G$, there exists a matching $M_2$ of $G_2$ and an AT-orientation $D_2$ of $G_2-M_2$ such that $\Delta^+(D_2)\le 3$, $d_{D_2}^+(x)=0$ and $d_{D_2}^+(y)=0$. Then $xy\in M_2$
and
neither $x$ nor $y$ is $M_2\setminus \{xy\}$-saturated.
Let $D_2'$ be an orientation of $G_2-(M_2\setminus \{xy\})$ with $D_2'=D_2\cup \{(y,x)\}$. Then $d_{D_2'}^+(y)=1$. So $D_2'$ is an AT-orientation with $\Delta^+(D_2')\le 3$, $d_{D_2'}^+(x)=0$ and $d_{D_2'}^+(y)=1$.
Let $M=M_1\cup (M_2\setminus \{xy\})$. Then $M$ is a matching of $G$. By (b1) and Lemma \ref{ksum}, $G-M$ has an AT-orientation $D$ such that $\Delta^+(D)\le 3$ and $d_D^+(u)=d_D^+(v)=0$,  contradicting  the choice of $G$.

Now we show (c).
Denote by $y$ a neighbor of $x$ in $G_2$ if $k=1$.
By the minimality of $G$, there exists a forest $F_2$ of $G_2$ and an acyclic orientation of $G_2-E(F_2)$ such that  $\Delta^+(D_2)\le 2$, $d_{D_2}^+(x)=0$ and $d_{D_2}^+(y)=0$. Then $xy\in E(F_2)$. Let $D_2'$ be an orientation of $G_2-(E(F_2)\setminus \{xy\})$ with $D_2'=D_2\cup \{(y,x)\}$. Then $d_{D_2'}^+(y)=1$. So $D_2'$ is an acyclic orientation with $\Delta^+(D_2')\le 2$, $d_{D_2'}^+(x)=0$ and $d_{D_2'}^+(y)=1$. Let $F=F_1\cup (F_2-xy)$. Then $F$ is a forest of $G$. By (c1) and Lemma \ref{ksum}, $G-E(F)$  has an acyclic orientation $D$ such that $\Delta^+(D)\le 2$ and $d_D^+(u)=d_D^+(v)=0$,  contradicting  the choice of $G$.
\end{proof}

A graph $G$ is $k$-degenerate if each subgraph of $G$ contains a vertex of degree at most $k$,
equivalently, $G$ has an acyclic orientation $D$ with $\Delta^+(D)\le k$.

\begin{corollary}
Let $G$ be a $K_{3,3}$-minor-free graph. Then there exists a forest $F$ such that $G-E(F)$ is $2$-degenerate.
\end{corollary}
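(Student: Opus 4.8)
The plan is to obtain this corollary directly from the construction performed inside the proof of Theorem~\ref{kat}, not merely from the bare statement of Theorem~\ref{kat}(iii). Using the equivalence recalled just above the corollary, a graph is $2$-degenerate if and only if it admits an acyclic orientation $D$ with $\Delta^+(D)\le 2$. Hence it suffices to exhibit a forest $F$ of $G$ together with an acyclic orientation of $G-E(F)$ whose maximum out-degree is at most $2$; the $2$-degeneracy of $G-E(F)$ then follows at once.

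Such an $F$ and orientation are exactly what item~(c) in the proof of Theorem~\ref{kat} produces. There it is shown that for every edge $uv\in E(G)$ there is a forest $F$ of $G$ and an \emph{acyclic} orientation $D$ of $G-E(F)$ with $\Delta^+(D)\le 2$ (together with $d_D^+(u)=d_D^+(v)=0$, which we do not need here). So if $E(G)\neq\emptyset$ I would fix an arbitrary edge $uv\in E(G)$, apply (c) to obtain $F$ and $D$, and conclude that $G-E(F)$ is $2$-degenerate by the equivalence above. If $E(G)=\emptyset$, the empty forest works, since an edgeless graph is trivially $2$-degenerate.

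The one point meriting care is why the statement of Theorem~\ref{kat}(iii) alone is not enough: a bound $AT(G-E(F))\le 3$ only guarantees an AT-orientation of $G-E(F)$ with maximum out-degree at most $2$, and such an orientation need not be acyclic, whereas $2$-degeneracy demands an acyclic one. The corollary therefore genuinely rests on the fact that the orientation built in part~(c) is acyclic. This acyclicity is not accidental: the base cases are acyclic (plane graphs through Lemma~\ref{atp}(c), and $K_5$ through the explicit transitive orientation with $(v_i,v_j)$ an arc iff $i>j$), and the clique-sum step preserves it because, in the notation of Lemma~\ref{ksum}, the clique $K$ is oriented transitively with $d_{D_2'}^+(u)=0$ for every $u\in K$, so that no arc leaves $K$ into $V(G_2)\setminus K$; consequently any directed cycle of $D=D_1'\cup D_2'$ must lie entirely in $G_1'$ or entirely in $G_2'$, contradicting the acyclicity of the two glued orientations. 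Since tracking this acyclicity through the already completed induction requires no new idea, I expect no real obstacle here: the corollary merely records a conclusion that the proof of Theorem~\ref{kat} has in fact already delivered.
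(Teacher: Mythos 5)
Your proposal is correct and follows essentially the same route as the paper: both extract from part (c) of the proof of Theorem~\ref{kat} a forest $F$ and an \emph{acyclic} orientation of $G-E(F)$ with maximum out-degree at most $2$, and conclude $2$-degeneracy from the equivalence between such orientations and degeneracy (the paper verifies this equivalence inline via a topological ordering, while you cite the statement given just before the corollary). Your extra remarks --- that Theorem~\ref{kat}(iii) alone would not suffice, and that acyclicity survives the clique-sum gluing --- are accurate and, if anything, make explicit a point the paper leaves implicit.
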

\begin{proof} By the proof of Theorem \ref{kat}, there exists a forest $F$ and an acyclic AT-orientation $D$ such that $\Delta^+(D)\le 2$. We may label the vertices in $G$ such that $(v_i,v_j)$ is an arc of $D$ if and only if $i>j$. Let $H\subseteq G-E(F)$ and $H'$ be the orientation of $H$ on $D$. Let $t=\max\{v_i:v_i\in V(H) \}$. Then $d_{H'}^+(v_t)\le 2$ and $d_{H'}^-(v_t)=0$ and so the degree of $v_t$ is at most $2$ in $H$. Therefore, the result follows.
\end{proof}

\noindent
{\bf Remark 1.}  Theorem \ref{minor} may be extended to $K_5^{\bot}$-minor-free graphs, where
 $K_5^{\bot}$ is the graph obtained from $K_4$ by adding two adjacent vertices of degree three with no common neighbors.
From \cite[Lemmas 2.4 and 4.1]{DL}, a graph $G$ is $K_5^{\bot}$-minor free if and only if $G$ a $K_5$-minor-free graph or  $K_5$, 
 or $G$ can be obtained from $K_5$-minor-free graphs and $K_5$  by $0$-, $1$-, and $2$-sums.
The result follows from a similar argument as in the proof in Theorem \ref{kat} (that $K_5$ satisfies (a), (b) and (c) there) and \cite[Lemma 3.1]{AKO}.

\noindent
{\bf Remark 2.} Theorem \ref{kat} may be extended to $(K_{3,3}+e)$-minor-free graphs. By
\cite[Lemmas 2.4 and 3.7]{DL} \label{lk33e}
a graph $G$ is $(K_{3,3}+e)$-minor free graph if and only if $G$ is a $K_{3,3}$-minor-graph or $K_{3,3}$, or $G$ can be obtained from $K_{3,3}$-minor-free graphs and $K_{3,3}$ by $0$-, $1$- and $2$-sums.
So, by similar argument as in the  proof of Theorem \ref{kat}, we only need to check the validity of  (a)--(c) there if  $G=K_{3,3}$. This is verified as follows:  Let $\{u_1,u_2,u_3\}\cup \{v_1,v_2,v_3\}$ be the bipartition of $G$ with $u=u_1$ and $v=v_1$.

(a) Let $D$ be the orientation of $G$ with arc set
$\{(v_i,u_j):i=2,3,j=1,2,3 \}\cup \{(v_1,u_1),(u_2,v_1),(u_3,v_1)\}$. Then $d_D^+(u_1)=0$, $d_D^+(v_1)=1$ and $\Delta^+(D)=3$. As $D$ is an acyclic orientation, it is an AT-orientation.

(b) Let $M=\{v_iu_i:i=1,2,3\}$.
Let $D$ be the orientation of $G-M$ with arc set $\{(v_2,u_1),(v_3,u_1),(u_2,v_1),(u_2,v_3),(u_3,v_1),(u_3,v_2)\}$. Then $d_D^+(u_1)=d_D^+(v_1)=0$ and $\Delta^+(D)=2$. As $D$ is an acyclic orientation, it is an AT-orientation.

(c) Let $F$ be a forest with $E(F)=\{u_1v_1,u_1v_2,u_2v_2,u_2v_3,u_3v_3\}$ and let $D$ be the orientation of $G-E(F)$ with arc set $\{(v_3,u_1),(u_2,v_1),(u_3,v_1),(u_3,v_2)\}$.
Then $d_D^+(u_1)=d_D^+(v_1)=0$ and $\Delta^+(D)=2$. As $D$ is an acyclic orientation, it is an AT-orientation.

\section{Future research direction}

As mentioned above, it is impossible that $AT(G)\le s+t-1$ for a
 $K_{s,t}$-minor-free graph $G$ in general. It is of interest to find a tight upper bound of the Alon-Tarsi number for any  $K_{s,t}$-minor-free graph.  Steiner \cite{St} proposed an open problem:
Is it true that for all integers $1\le s\le t$, every $K_{s,t}$-minor-free graph $G$
satisfies  $\chi_\ell(G)\le 2s+t$? Similarly, one may ask whether it is true that for all integers $1\le s\le t$, every $K_{s,t}$-minor-free graph $G$
satisfies  $AT(G)\le 2s+t$.

\bigskip

\noindent {\bf Acknowledgement.}
%The authors thank Prof. Xuding Zhu for his encouragement.
This work was supported by the National Natural Science Foundation of China (No.~12071158).


\begin{thebibliography}{99}



\bibitem{AKO} T. Abe, S.-J. Kim, K. Ozeki,
The Alon-Tarsi number of $K_5$-minor-free graphs, Discrete Math. 345 (2022) 112764.

%\bibitem{Alon}
%N. Alon, Combinatorial Nullstellensatz, Comb. Probab. Comput. 8 (1999) 7--29.

\bibitem{AT}
N. Alon, M. Tarsi, Colorings and orientations of graphs, Combinatorica 12 (1992) 125--134.

\bibitem{CK}
W. Cushing, H. A. Kirestread, Planar graphs are $1$-relaxed, $4$-choosable, Eur. J. Combin. 31 (2010) 1395--1397.

\bibitem{DL}
G. Ding, C. Liu, Excluding a small minor, Discrete Appl. Math. 161 (2013) 355--368.

\bibitem{ERT}
P. Erd\H{o}s, A.L. Rubin, H. Taylor,
Choosability in graphs, in: Proceedings of the West Coast Conference on Combinatorics, Graph Theory and Computing, Arcata, CA,  1979; Congress. Numer. 26 (1980) 125--157.

%\bibitem{CH}
%E. Culver, S. G. Hartke, Relation between the correspondence chromatic number and the Alon-Tarsi number, Discrete Math. 346 (2023) 113347.

\bibitem{GZ}
J. Grytczuk, X. Zhu, The Alon-Tarsi number of a planar graph minus a matching, J. Combin. Theory Ser. B 145 (2020) 511--520.

\bibitem{He}
D. Hefetz,
On two generalizations of the Alon-Tarsi polynomial method,
J. Combin. Theory Ser. B 101 (6) (2011)  403--414.

%\bibitem{H}
%F. Harary, Graph Theory, Addison-Wesley, Reading, MA, 1969.
%
\bibitem{JT}
T. Jensen, B. Toft, Graph Coloring Problems, Wiley, New York, 1995.

\bibitem{KKZ}
R. Kim, S. Kim, X. Zhu, The Alon-Tarsi number of subgraphs of a planar graph, arXiv:1906.01506.

%\bibitem{S}
%R. \v{S}krekovski, Choosability of $K_5$-minor-free graphs, Discrete Math. 190 (1998) 223--226.

\bibitem{St}
R. Steiner, Disproof of a conjecture by Woodall on the choosability of $K_{s,t}$-minor-free graphs, Electron. J. Combin. 29 (3) (2022) Paper  3.2, 11 pp.

\bibitem{T}
C. Thomassen, Every planar graph is $5$-choosable, J. Combin. Theory Ser. B  62 (1994) 180--181.

\bibitem{V}
M. Viogt, List colourings of planar graphs, Discrete Math. 120 (1993) 215--219.

\bibitem{Viz}
V.G. Vizing,
Coloring the vertices of a graph in prescribed colors (in Russian), Diskretn. Anal.  29 (1976)  3--10, 101.


\bibitem{W}
K. Wagner, Uber eine Eigenschaft der ebenen Komplexe, Math. Ann. 114 (1937) 570--590.

\bibitem{Wo} D.R. Woodall,
List colourings of graphs, in:  J.W.P. Hirschfeld (Ed.), Surveys in Combinatorics, 2001, London Math. Soc. Lecture Note Ser., 288,  Cambridge Univ. Press, Cambridge, 2001,  pp. 269--301.

\bibitem{Zhu}
X. Zhu, The Alon-Tarsi number of planar graphs, J. Combin. Theory Ser. B 134 (2019) 354--358.

\end{thebibliography}
\end{document}